\documentclass[12pt]{article}
\usepackage{a4}
\usepackage{amsthm}
\usepackage{amsfonts}
\usepackage{amsmath}
\newtheorem{theorem}{Theorem}
\newtheorem{lemma}[theorem]{Lemma}
\newtheorem{proposition}[theorem]{Proposition}
\newtheorem{corollary}[theorem]{Corollary}
\def\CC{{\cal C}}
\def\FF{{\cal F}}
\def\VF{{\rm VF}}
\def\OO{{\cal O}}
\def\VFodd{{\rm VF}_{\rm odd}}
\def\dead{{\rm dead}}
\def\edge{_e}
\def\triangle{_t}
\begin{document}
\title{Min-max relations for odd cycles in planar graphs\thanks{This research was done in the framework of the Czech-French project PHC Barrande 24444XD (the Czech side reference: MEB021115).}}
\author{Daniel Kr\'al'\thanks{Department of Applied Mathematics and Institute for Theoretical Computer Science (ITI), Faculty of Mathematics and Physics, Charles University, Malostransk\'e n\'am\v est\'\i{} 25, 118 00 Prague 1, Czech Republic. E-mail: \texttt{kral@kam.mff.cuni.cz}. Institute for Theoretical computer science is supported as project 1M0545 by Czech Ministry of Education.}\and
        Jean-S{\'e}bastien Sereni\thanks{CNRS (LIAFA, Universit\'e Denis Diderot), Paris, France, and Department of Applied Mathematics (KAM), Faculty of Mathematics and Physics, Charles University, Prague, Czech Republic.  E-mail: \texttt{sereni@kam.mff.cuni.cz}.}\and
	Ladislav Stacho\thanks{Department of Mathematics, Simon Fraser University, 8888 University Drive, Burnaby, B.C., V5A 1S6, Canada. E-mail: {\tt lstacho@sfu.ca}. This author's stay at LIAFA was partially supported by the French {\em Agence nationale de la recherche} under the reference ANR 10 JCJC 0204 01.}
	}
\date{}
\maketitle
\begin{abstract}
Let $\nu(G)$ be the maximum number of vertex-disjoint odd cycles of a graph $G$ and
$\tau(G)$ the minimum number of vertices whose removal makes $G$ bipartite.
We show that $\tau(G)\le 6\nu(G)$ if $G$ is planar. This improves the previous
bound $\tau(G)\le 10\nu(G)$ by Fiorini, Hardy, Reed and Vetta [Math.~Program.~Ser.~B 110 (2007), 71--91].
\end{abstract}

\section{Introduction}
\label{sec-intro}

Packing problems are among the most important problems in combinatorial optimization.
In this paper, we focus on the problem of packing odd cycles in graphs.
If $G$ is a graph,
let $\nu(G)$ be the size of a maximum collection (packing) of vertex-disjoint odd cycles of $G$, and
$\tau(G)$ the size of a minimum set $S$ of vertices (transversal) such that each odd cycle of $G$
contains at least one vertex of $S$ (which is is equivalent to $G\setminus S$ being bipartite).
Clearly, $\nu(G)\le\tau(G)$.

One of the most studied questions on packing problems is whether the size of a maximum packing can be bounded
by a function of the size of a minimum transversal. If this is the case, the problem is said
to have the {\em Erd{\H o}s-P\'osa property}. The name is due to the result of Erd{\H o}s and
P\'osa~\cite{bib-erdos65+} who established this property for packing (general) cycles
in graphs.
For general graphs, the packing problem for odd cycles does not have the Erd{\H o}s-P\'osa property.
Reed~\cite{bib-reed99} gave an example of a projective planar graph $G$ with $\tau(G)$
arbitrary large and no two vertex-disjoint odd cycles, i.e., $\nu(G)=1$.
These graphs are called Escher walls. In fact, they play a key role for the problem.
The main result from~\cite{bib-reed99} asserts that the problem of packing odd cycles
in a minor-closed family of graphs has the Erd{\H o}s-P\'osa property if and only if
the family avoids Escher walls of arbitrary height.

Since the class of planar graphs avoids all Escher walls, it follows that there exists
a function $f$ such that $\tau(G)\le f(\nu(G))$ if $G$ is planar. However, the function
given by the methods from~\cite{bib-reed99} is enormous since the proof is based
on the graph minor machinery. So, it is natural to search for better estimates
for particular graph classes.
In~\cite{bib-fiorini07+}, Fiorini, Hardy, Reed and Vetta showed that
$\tau(G)\le 10\nu(G)$ for planar graphs.
The purpose of this article is to further improve this estimate
to $\tau(G)\le 6\nu(G)$ (Theorem~\ref{thm-main}).
These results also hold in a more general setting where edges are assigned parities.
Since our proof is constructive and all its steps can be efficiently performed,
we also obtain the existence of a polynomial-time $6$-approximation algorithm
for the odd cycle packing problem in planar graphs (Corollary~\ref{cor-alg}) which
improves the $11$-approximation algorithm given in~\cite{bib-fiorini07+}.
The problem is known~\cite{bib-hardy05} to be NP-hard.

We do not believe the multiplicative constant in Theorem~\ref{thm-main} is optimal.
In fact, we are not aware of an example showing it exceeds two.
This multiplicative factor is known~\cite{bib-rautenbach01+} to be true,
i.e., $\tau(G)\le 2\nu(G)$,
if $G$ is highly connected (the connectivity depends on $\nu(G)$).
The optimal constant is however known for the edge version of the problem in the planar case.
Similarly to the vertex case, there is no function bounding the edge transversal $\tau\edge(G)$
in terms of the size $\nu\edge(G)$ of a maximum collection of edge-disjoint odd cycles for general graphs $G$.
However, for planar graphs, such a function exists~\cite{bib-berge00+}, and
the optimal estimate $\tau\edge(G)\le 2\nu\edge(G)$ was proven in~\cite{bib-kral04+};
its more compact proof can be found in~\cite{bib-fiorini07+}.

Another related problem is a conjecture of Tuza which asserts that
the minimum size $\tau\triangle(G)$ of a set of edges intersecting every triangle
is at most twice the maximum number $\nu\triangle(G)$ of edge-disjoint triangles.
The conjecture is known to be true for planar graphs~\cite{bib-tuza90} and
it is also known that two of its fractional relaxations hold~\cite{bib-krivelevich95}.

\section{Notation}

We work in the more general setting of {\em signed} graphs. In this setting,
each edge is assigned a parity, i.e., it is odd or even. A cycle is said
to be {\em odd} if it contains an odd number of odd edges and it is {\em even}
otherwise. A face of a plane signed graph is {\em odd} if its boundary contains
an odd number of odd edges (bridges are counted twice); it is {\em even} otherwise.
It is easy to see that the boundary of an odd face must contain an odd cycle.
The property whether a cycle is odd or even is referred to as its {\em parity}.

Since we exclusively focus on odd cycles in signed graphs,
we call a set $S$ of vertices of a signed graph $G$ a {\em transversal} of $G$
if $G\setminus S$ has no odd cycle. A collection $\CC$ of cycles is a {\em packing}
if the cycles in $\CC$ are vertex-disjoint. The two parameters central to our study
are $\tau(G)$ which stands for the minimum size of a transversal of $G$ and
$\nu(G)$ which is the maximum number of odd cycles forming a packing in $G$.
In case that all edges of a signed graph $G$ are odd, a cycle in $G$ is odd
if and only if its length is odd, so the definitions coincide with those
given in Section~\ref{sec-intro}.

Signed graphs we consider in this paper are always assumed to be simple.
This does not decrease the generality of our results: if a signed graph $G$
contains parallel edges, we can subdivide each parallel edge in such a way that
one of the new edges has the same parity as the original edge and the other one is even.
It is not hard to observe that this operation affects neither $\tau(G)$ nor $\nu(G)$ (considering
pairs of parallel edges with different parities as odd cycles of length two).

\subsection{$T$-joins and $T$-cuts}

A key ingredient for our proof is the notion of a $T$-join from combinatorial optimization.
The algorithm for finding a minimum-size $T$-join forms the core of the algorithm for solving
the max-cut problem for planar graphs. The planar max-cut problem is dual to finding the minimum set of edges
whose removal bipartizes a given planar graph, which is the quantity $\tau\edge$ defined earlier.
So, it is not surprising that the proof of an earlier bound in~\cite{bib-fiorini07+} as well as
our proof use this notion. In fact, the arguments we use in Section~\ref{sec-tjoin}
can be viewed as an extension of those given in~\cite[Subsection 4.3]{bib-fiorini07+}.

We now present the notion and results we later need. The reader can find a more detailed
introduction in monographs on combinatorial optimization, e.g., \cite{bib-cook98+,bib-schrijver03}.
A {\em $T$-join} in a connected graph $G$ with a distinguished even-size set $T$ of its vertices
is a subgraph $J$ such that the odd degree vertices of $J$ are precisely the vertices of $T$.
The {\em size} of a $T$-join $J$ is the number of edges it contains and it is denoted by $|J|$.
The problem of finding a minimum-size $T$-join
can be reduced to the weighted perfect matching problem on complete graphs
which is well-understood and efficiently solvable. The reduction is as follows:
form a complete graph with vertex set $T$ and assign each edge $tt'$ the length $d_G(t,t')$ of the shortest
path between $t$ and $t'$ in $G$. For a minimum weight perfect matching in the auxiliary graph,
define a subgraph $J$ to be the union of the shortest paths corresponding to the edges of the matching (it can
be shown that the paths are edge-disjoint if the perfect matching has minimum weight). So, $J$ forms
a $T$-join in $G$ which is minimum.

It is well-known that the problem of finding a minimum weight perfect matching
can be formulated as a linear program.
Considering its dual, we obtain the following optimization problem
with variables $y_v$, $v\in T$, and $y_S$, $S\in\OO(T)$,
where $\OO(T)$ stands for the set of all odd-size subsets of $T$ with at least three elements.
$$
\begin{array}{rcll}
y_S & \ge & 0 & \mbox{for $S\in\OO(T)$} \\
y_t+y_{t'}+\sum_{S\in\OO(T), |S\cap\{t,t'\}|=1} y_S & \le & d_G(t,t') & \mbox {for every pair $t,t'\in T$} \\
\\
\min \sum_{t\in T}y_t+\sum_{S\in\OO(T)}y_S
\end{array}
\qquad \mbox{(Y)}
$$
The duality of linear programming implies that the optimum value of the linear program (Y) is equal to
the minimum size of a $T$-join in a graph $G$.
A solution of (Y) is called {\em laminar} if $y_S>0$ and $y_{S'}>0$ implies that
either $S$ and $S'$ are disjoint or one is a subset of the other.
It is well-known that the linear program (Y) has always an optimum laminar solution.
Moreover, since the weights of the edges satisfy the triangle inequality,
there is an optimum laminar solution of (Y) with all variables being non-negative.
In case that the weights of the edges in the auxiliary graph are even (which happens, e.g., when $G$ is bipartite),
there always exists an optimum integral solution of (Y) which is also non-negative and laminar.
We summarize these observations in the next proposition.

\begin{proposition}
\label{prop-dual}
Let $G$ be a connected graph with a distinguished even-size set $T$ of its vertices.
The value of the optimum solution of the linear program (Y) is equal to the minimum size of a $T$-join of $G$.
Moreover, there exists an optimum solution of (Y) that is non-negative and laminar, and
if $G$ is bipartite, there exists an optimum solution that is non-negative, laminar and integral.
\end{proposition}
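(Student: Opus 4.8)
The plan is to deduce the proposition from the reduction to matching already described: since a minimum $T$-join of $G$ has the same size as a minimum-weight perfect matching in the auxiliary complete graph on $T$ with edge lengths $d_G(t,t')$, it suffices to analyse the latter problem. First I would invoke Edmonds' description of the perfect matching polytope, adding to the degree equalities $\sum_{t'}x_{tt'}=1$ the blossom inequalities $\sum_{tt'\in\delta(S)}x_{tt'}\ge1$ for every odd $S\subseteq T$; this system is integral, so its linear relaxation has optimum equal to the minimum-weight perfect matching. Passing to the dual, the degree equalities yield the free variables $y_t$, the blossom inequalities for $S\in\OO(T)$ yield the non-negative variables $y_S$, and the constraint for the edge $tt'$ becomes exactly the one in (Y) once one observes that a singleton $\{t\}$ separates $t,t'$ precisely when $t\in\{t,t'\}$, so that $y_t+y_{t'}$ is simply the singleton contribution. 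Strong linear programming duality then gives the first assertion.

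Next I would establish laminarity by an uncrossing argument, which I expect to be the main technical step. Starting from any optimal solution, suppose two sets $S,S'$ with $y_S,y_{S'}>0$ cross. If $|S\cap S'|$ is odd I shift $\varepsilon:=\min(y_S,y_{S'})$ off $y_S,y_{S'}$ and onto $y_{S\cap S'}$ and $y_{S\cup S'}$; if $|S\cap S'|$ is even I shift it onto $y_{S\setminus S'}$ and $y_{S'\setminus S}$. In both cases the two new sets have odd size, so the solution stays within the admissible variables (a resulting singleton merely updates some $y_t$), and the objective $\sum_t y_t+\sum_S y_S$ is unchanged since two units of weight are removed and two added. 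Feasibility is preserved because, edge by edge, the cut function is submodular (respectively posimodular): for each pair $t,t'$ the number of swapped sets separating $t$ and $t'$ does not increase, so no constraint $\le d_G(t,t')$ is violated. To force crossings to disappear, I would fix a strictly increasing, strictly concave weight $f$ on sizes, say $f(s)=s(2|T|-s)$, and take an optimal solution minimising $\Phi:=\sum_S y_S\,f(|S|)$, which exists by compactness of the set of optimal solutions. Each move strictly decreases $\Phi$: the first because the new pair $\{S\cap S',S\cup S'\}$ is strictly more spread than $\{S,S'\}$ while $f$ is concave, and the second because $f$ is increasing and both new sets are strictly smaller. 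Hence the minimiser admits no crossing and is laminar.

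It remains to arrange the sign and integrality conditions. For non-negativity of the free variables $y_t$ I would use that the lengths $d_G$ satisfy the triangle inequality: whenever some $y_t$ is negative, the metric allows its weight to be rerouted through a neighbour without decreasing the objective, yielding an optimal and still laminar solution in which every variable is non-negative. Finally, when the edge lengths are even, as they are when $G$ is bipartite, integrality follows from the half-integrality of laminar $T$-cut packings together with a parity argument on the even lengths, equivalently from the total dual integrality of the matching system of Cunningham and Marsh. The delicate points I anticipate are the edge-wise verification of sub- and posimodularity that keeps each uncrossing feasible, and the choice of the potential $\Phi$ that makes both moves simultaneously monotone; the sign and integrality steps are then routine consequences of the metric and even-weight assumptions.
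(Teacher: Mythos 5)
You should first know that the paper contains no proof of this proposition at all: it is explicitly a summary of standard facts from combinatorial optimization, with pointers to the monographs of Cook et al.\ and Schrijver. So you are not diverging from a paper proof; you are supplying the textbook derivation the authors implicitly invoke, and your architecture is the right one. The reduction to minimum-weight perfect matching on the metric complete graph over $T$ is the one the paper itself describes, Edmonds' perfect matching polytope plus strong LP duality does give the first assertion (note in passing that (Y) as printed says ``min'' but is of course the maximization dual, which is how you correctly treat it), and your uncrossing is sound: coefficient-wise sub- and posimodularity of the cut indicator vectors is a finite case check, both replacement pairs consist of odd sets, the objective is preserved, and your potential $\Phi=\sum_S y_S f(|S|)$ with $f$ strictly increasing and strictly concave strictly decreases under both moves (including when a replacement set is a singleton, since $f(1)>0$).

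Three concrete soft spots remain. First, the set of optimal solutions of (Y) is \emph{not} compact: for $S_0=T\setminus\{t_0\}$, adding $\mu$ to $y_{S_0}$ and subtracting $\mu$ from $y_{t_0}$ preserves feasibility and the objective for every $\mu\ge 0$, so the optimal face has a recession direction. Your minimizer of $\Phi$ still exists, because $\Phi$ is linear, non-negative on the optimal face, and a linear function bounded below on a polyhedron attains its infimum --- but ``compactness'' is the wrong justification. Second, ``edge lengths are even when $G$ is bipartite'' is false when $T$ meets both color classes (then $d_G(t,t')$ is odd for bichromatic pairs); it is true exactly when $T$ lies in one class, which is the situation the paper actually uses later ($T$ is a set of faces of $\VF(G)$, one side of the bipartition). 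For the proposition as stated, with arbitrary $T$ in a bipartite $G$, the correct hypothesis is that every cycle of the auxiliary graph has even total length (which does hold, since $d_G(t,t')$ has the parity of the color difference of $t,t'$), and that is what dual half-integrality plus scaling --- or Seymour's theorem on $T$-joins and $T$-cuts, which the paper cites --- actually needs; also, invoking Cunningham--Marsh as ``equivalent'' is loose, since that theorem concerns the blossom system $x(E[S])\le\lfloor|S|/2\rfloor$ for general matchings, not the cut-form perfect matching system you dualized. Third, and this is the only genuine gap: non-negativity of the $y_t$'s from the triangle inequality is asserted, not proved. ``Rerouting through a neighbour'' is not an argument, and the step is not routine --- increasing a negative $y_t$ raises the left-hand side of every constraint on pairs containing $t$, and one must exhibit a compensating change that preserves optimality, feasibility on \emph{all} pairs, and laminarity. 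A correct route is an explicit exchange argument, or passing through the Edmonds--Johnson $T$-cut packing dual of the $T$-join dominant, where non-negativity is built in. Since the proposition is a literature summary, citing these facts would be acceptable; as a self-contained proof, the non-negativity step must be filled in.
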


By the duality of linear programming, if a $T$-join $J$ in a graph $G$ has the size equal to a value of
a solution $y$ of (Y), then $J$ is a minimum size $T$-join and $y$ is an optimum solution of (Y).
We use this fact in the proof of Lemma~\ref{lm-deadly} where we keep such a pair through the induction,
so the $T$-joins we consider are optimum.

A combinatorial structure dual to a $T$-join is a $T$-cut: a {\em $T$-cut} is an edge cut that
splits the graph into two parts each containing an odd number of vertices of $T$. It is known~\cite{bib-seymour81} that
if $G$ is bipartite, then the size of a minimum $T$-join is equal to the maximum number of edge-disjoint
$T$-cuts of $G$.
More insight in the structure of optimum $T$-joins and solutions of (Y)
can be derived by analyzing specific procedures for obtaining them.
We state one such condition based on the blossom algorithm for the (weighted) perfect matching problem.
To do so, we define a vertex $v$ of a graph $G$ to be {\em $k$-close} to a set $S\in\OO(T)$
with respect to a solution $y$ of (Y) if
$$\min_{t\in S}\left(d_G(t,v)-y_t-\sum_{\substack{S'\in\OO(S)\\t\in S'\not=S}}y_{S'}\right)=k$$
where $d_G(t,v)$ is the distance between $t$ and $v$ in $G$.
A solution of (Y) is a {\em moat solution} if 
\begin{itemize}
\item $y$ is non-negative, integral and laminar,
\item for every inclusion-wise minimal set $S\in\OO(T)$ with $y_S>0$, there exists an ordering
      $t_1,\ldots,t_s$ of vertices of $S$ such that $y_{t_i}+y_{t_{i+1}}=d_G(t_i,t_{i+1})$
      for every $i=1,\ldots,s$ (indices modulo $s$), and
\item $\bigcup_{t\in T}\CC_t\cup\bigcup_{S\in\OO(T)}\CC_S$ is a collection of edge-disjoint $T$-cuts
      where $\CC_t$, $t\in T$, is the collection of $y_t$ $T$-cuts formed by the edges joining
      pairs of vertices at distance $k$ and $k+1$ from $t$ in $G$ for $k=0,\ldots,y_t-1$, and
      $\CC_S$, $S\in\OO(T)$, is the collection of $y_S$ $T$-cuts formed by the edges joining
      pairs of vertices that are $k$-close and $(k+1)$-close to $S$ in $G$ for $k=0,\dots,y_{S}-1$.
\end{itemize}
A moat solution always exists if $G$ is bipartite. Let us state this as a separate proposition.
\begin{proposition}
\label{prop-moats}
Let $G$ be a connected bipartite graph with a distinguished even-size set $T$ of its vertices.
There exists an optimum solution of (Y) that is a moat solution.
\end{proposition}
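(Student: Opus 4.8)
The plan is to obtain the moat solution from a run of the primal-dual (blossom) algorithm for minimum-weight perfect matching on the auxiliary complete graph with vertex set $T$ and edge lengths $d_G(t,t')$, reading off each item of the definition from the invariants the algorithm maintains. Proposition~\ref{prop-dual} already supplies an optimum solution of (Y) that is non-negative, integral and laminar, so the real content lies in the second and third items, which are precisely the structural certificates this algorithm produces.

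First I would recall the invariants. Throughout its execution the algorithm keeps a dual-feasible vector $y$ together with a laminar family consisting of the singletons and the sets $S$ with $y_S>0$ (the blossoms), and it raises the dual variables uniformly. The point is the moat interpretation in $G$: raising $y_t$ by one unit corresponds to enlarging the ball $B_k(t)=\{v:d_G(t,v)\le k\}$ by one step, while raising $y_S$ by one unit enlarges by one step the set of vertices that are $k$-close to $S$. Since $G$ is bipartite every distance $d_G(t,t')$ is an integer, so the algorithm runs with unit increments and terminates with a non-negative, integral, laminar $y$ whose value equals the minimum matching weight, hence the minimum size of a $T$-join. Each unit increment peels off one concentric ring, and the rule governing when a singleton dual is frozen (a blossom being formed, or an edge matched, exactly when its ball would enclose an even number of terminals) guarantees that every such ring is a genuine $T$-cut; the same holds for the set-moats.

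Next I would read the second item off the creation of blossoms. An inclusion-minimal set $S$ with $y_S>0$ is an inclusion-minimal blossom, and a blossom is created precisely when the tight edges among its vertices close up into an odd cycle $t_1t_2\cdots t_s$. Along each edge $t_it_{i+1}$ the feasibility inequality $y_{t_i}+y_{t_{i+1}}+\sum_{|S'\cap\{t_i,t_{i+1}\}|=1}y_{S'}\le d_G(t_i,t_{i+1})$ holds with equality, and at the instant of creation the minimality of $S$ prevents any $y_{S'}$ from contributing to this edge, so $y_{t_i}+y_{t_{i+1}}=d_G(t_i,t_{i+1})$; this equality then persists because the individual duals of vertices inside a blossom are frozen. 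This is the required cyclic ordering.

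The main obstacle is the third item, the pairwise edge-disjointness of the $T$-cuts in $\bigcup_{t\in T}\CC_t\cup\bigcup_{S\in\OO(T)}\CC_S$. I would argue that an edge $uv$ of $G$ can lie on two of these rings only if the total dual mass of the moats separating $u$ from $v$ strictly exceeds the length of a shortest path between $u$ and $v$, and that this total is bounded by the feasibility inequalities of (Y), so no edge is reused. Making ``the moats separating $u$ from $v$'' precise for the set-moats $\CC_S$ --- reconciling the $k$-close/$(k+1)$-close definition with the ball increments used for singletons --- is the delicate point, and it is exactly where integrality of the distances and the tightness of the blossom cycles are needed. As a check, edge-disjointness forces the number of these $T$-cuts to equal $\sum_{t\in T}y_t+\sum_{S\in\OO(T)}y_S$, the optimum value of (Y), hence the minimum size of a $T$-join; this agrees with Seymour's min-max theorem quoted above, according to which in a bipartite graph the minimum $T$-join equals the maximum number of edge-disjoint $T$-cuts.
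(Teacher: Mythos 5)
A preliminary remark: the paper itself gives no proof of Proposition~\ref{prop-moats}; it is stated as a known fact obtained ``by analyzing specific procedures,'' namely a run of the blossom algorithm, with the details left to the cited monographs. So your choice of route --- extracting the moat structure from the primal-dual invariants of the weighted matching algorithm on the auxiliary complete graph over $T$ --- is exactly the justification the paper has in mind, and your reduction of the first item to Proposition~\ref{prop-dual} and your use of integrality of distances (bipartiteness) for unit increments are sound. Judged as a proof, however, the proposal has a genuine hole precisely where you flag one: the third item is never established. You say you ``would argue'' that an edge on two rings would force the separating dual mass to exceed a shortest-path length, and you call the reconciliation of the $k$-closeness function with the ball increments ``the delicate point'' without resolving it. That computation is the actual content: for two singleton rings one needs, from an edge $uv$ lying in ring $k$ of $\CC_t$ and ring $k'$ of $\CC_{t'}$, the chain $d_G(t,t')\le d_G(t,u)+1+d_G(t',v)\le (y_t-1)+1+(y_{t'}-1)<y_t+y_{t'}$, contradicting feasibility of (Y); the singleton/set and set/set cases need the analogue with the closeness function, where laminarity matters (for $t\in S$, vertices inside the moat of $t$ have negative closeness to $S$, which is what keeps $\CC_S$ off the rings of $\CC_t$). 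One must also verify that each ring is a $T$-cut at all, e.g.\ that feasibility forces $B_k(t)\cap T=\{t\}$ for $k<y_t$, and that vertex-disjointness in the sense needed later is not claimed here --- only edge-disjointness. Your closing appeal to Seymour's theorem is a consistency check, not a substitute for any of this.

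There is also a subtler soft spot in your argument for the second item. In the blossom algorithm, a blossom is shrunk along an odd cycle of \emph{pseudonodes}, not necessarily of original terminals, so for an inclusion-minimal $S$ of the final laminar family the vertices of $S$ need not close up into a single cycle of tight edges at the moment of creation; your phrase ``the minimality of $S$ prevents any $y_{S'}$ from contributing'' conflates minimality at termination with minimality at creation time. To repair this one must invoke the invariant that duals of pseudonodes nested inside $S$ are frozen while $S$ is shrunk, so minimality of $S$ in the final solution forces the inner duals to have been zero already when $S$ was formed, allowing the inner blossoms to be expanded into a tight odd cycle on the terminals themselves; this persistence argument is missing. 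In short: right approach (the same one the paper silently relies on), correct skeleton, but the two load-bearing verifications --- tightness of the cyclic ordering through nested blossoms, and edge-disjointness of the mixed collection of moat cuts --- are announced rather than carried out.
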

Observe that every optimum $T$-join intersects every $T$-cut in the collection $\bigcup_{t\in T}\CC_t\cup\bigcup_{S\in\OO(T)}\CC_S$
from the definition of a moat solution and
this intersection is formed by a single edge: this follows from that every $T$-join intersects every $T$-cut and
the size of an optimum $T$-join is equal to the number of $T$-cuts in the collection.

\section{Faces in clouds}

In this section, we analyze sets of odd faces that are ``connected'' in a considered plane signed graph.
Formally, we define the vertex-face incidence graph $\VF(G)$ of a plane signed graph $G$
to be the bipartite graph with vertex set formed by vertices and faces of $G$ such that
the vertex of $\VF(G)$ associated with a face $f$ of $G$ is adjacent to the vertices of $G$ incident with $f$.
The subgraph of $\VF(G)$ induced by the vertices of $G$ and the odd faces of $G$ is denoted by $\VFodd(G)$.
Finally, a {\em cloud} is a set of all odd faces in the same component of the graph $\VFodd(G)$.

We start with the following lemma.

\begin{lemma}
\label{lm-cloud-aux}
Let $G$ be a plane $3$-connected signed graph and $R$ a cloud of $G$.
There exists a set $\FF$ of vertex-disjoint faces of $R$ and
a set of vertices $U$ with the following properties:
\begin{description}
\item[P1] each face of $R$ is incident with at least one vertex of $U$,
\item[P2] each vertex of $U$ is incident with a face of $\FF$, and
\item[P3] $|U|\le 5|\FF|-1$.
\end{description}
\end{lemma}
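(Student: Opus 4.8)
The plan is to take $\FF$ to be an inclusion-wise maximal family of pairwise vertex-disjoint faces of $R$, and then to carve $U$ out of $\bigcup_{f\in\FF}V(f)$. Maximality of $\FF$ already delivers a weak form of \textbf{P1}: if some face $g\in R$ shared no vertex with any $f\in\FF$, then $\FF\cup\{g\}$ would be a larger vertex-disjoint family, a contradiction; hence every face of $R$ is incident with a vertex of $\bigcup_{f\in\FF}V(f)$. Thus taking $U=\bigcup_{f\in\FF}V(f)$ satisfies \textbf{P1} and \textbf{P2} outright, and the whole difficulty is compressed into the size bound \textbf{P3}. Since $G$ is $3$-connected, every face of $R$ is bounded by a cycle, so ``incident with'' and ``shares a vertex with'' behave interchangeably and the incidences we manipulate are clean.

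To prepare the count I would replace $U$ by an inclusion-wise minimal subset of $\bigcup_{f\in\FF}V(f)$ that still meets every face of $R$; minimality preserves \textbf{P1} and \textbf{P2} and endows each $u\in U$ with a \emph{private} face $g_u\in R$ incident with $u$ but with no other vertex of $U$. The private faces $g_u$ are pairwise distinct, which will let me treat the vertices of $U$ as pairwise separated witnesses scattered in the plane. The point of minimality is that it forbids ``wasteful'' coverers and thereby ties $|U|$ to the planar geometry of how the faces of $R$ wrap around the faces of $\FF$.

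The heart of the argument, and the step I expect to be the main obstacle, is the planar bound $|U|\le 5|\FF|-1$. My plan is to build an auxiliary connected plane graph $Q$ whose vertex set is (a representative interior point of) each face of $\FF$, drawn so that the way the remaining faces of $R$ sit between and around the faces of $\FF$ is recorded by the edges and the bounded faces of $Q$. Connectivity of $Q$ is inherited from the cloud being a single component of $\VFodd(G)$: a path in $\VFodd(G)$ linking two faces of $\FF$ through odd faces translates into a connection between the corresponding vertices of $Q$. Euler's formula then bounds $Q$ by at most $3|\FF|-6$ edges and at most $2|\FF|-4$ faces; the connectivity is exactly what upgrades a forest-type estimate to one usable with Euler, and is the source of the additive slack in the final inequality. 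The crux is to define a \emph{charging} that sends each $u\in U$, via its private face $g_u$, injectively either to an edge of $Q$ (when $g_u$ lies between two faces of $\FF$) or to a bounded face of $Q$ (when it is enclosed by several faces of $\FF$ around a vertex), whence $|U|\le|E(Q)|+|F(Q)|\le 5|\FF|-11$ once $|\FF|$ is large. Verifying that this charging is well defined and injective — using $3$-connectivity to control how the faces of $R$ encircle vertices and edges — is the delicate part. Finally I would dispatch the small clouds (for instance when $\FF$ has very few faces, or $R$ is a single face) by direct inspection; these degenerate cases are where the bound is genuinely only $5|\FF|-1$ rather than $5|\FF|-11$, so they pin down the additive constant stated in \textbf{P3}.
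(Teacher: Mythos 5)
There is a genuine gap, and it is located exactly where you placed your trust: the choice of $\FF$ as an arbitrary inclusion-wise maximal vertex-disjoint family, with $U$ confined to $\bigcup_{f\in\FF}V(f)$, cannot yield \textbf{P3}. Consider a large odd face $f$ bounded by a cycle $v_1\ldots v_m$, with an odd triangle $t_i=v_ia_ib_i$ attached at each $v_i$ (the $a_i,b_i$ lying on an outer cycle $a_1b_1a_2b_2\ldots a_mb_m$, with spokes $v_ia_i$, $v_ib_i$; this is $3$-connected, and since any even-size set of faces can be realized as the set of odd faces, take the odd faces to be $f,t_1,\ldots,t_m$ with $m$ odd). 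All the $t_i$ meet $f$, so $\FF=\{f\}$ is inclusion-wise maximal; but $t_i$ meets $V(f)$ only at $v_i$, so any $U\subseteq V(f)$ satisfying \textbf{P1} must contain all of $v_1,\ldots,v_m$, giving $|U|=m$ while $5|\FF|-1=4$. The same example shows why the charging step you flagged as delicate is in fact unrepairable: the private face $g_u$ of a vertex $u\in U$ may meet only \emph{one} face of $\FF$, at the single vertex $u$, and unboundedly many faces of $R$ can attach to one face of $\FF$ at pairwise distinct vertices; such faces have no edge or bounded face of your auxiliary graph $Q$ to be charged to, so no injection $U\to E(Q)\cup F(Q)$ exists. (As a minor point, your arithmetic also slips: $|E(Q)|+|F(Q)|\le(3|\FF|-6)+(2|\FF|-4)=5|\FF|-10$, not $5|\FF|-11$, but this is immaterial next to the structural failure.)

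The paper avoids this trap by \emph{not} choosing $\FF$ greedily by maximality: it builds $\FF$ and $U$ iteratively, and at each step forms an auxiliary plane graph $H$ on the still-uncovered (``free'') faces, with edges recording shared vertices, arranged so that the number of \emph{distinct attachment vertices} of a face is bounded by its degree in the simplified graph $H'$. Planarity gives a free face of degree at most five in $H'$; that face is added to $\FF$ and its at most five attachment vertices are added to $U$, which blocks \emph{every} face touching it. In other words, the crucial idea your plan is missing is to select faces whose neighbors attach through few vertices — bounded attachment is a property one must engineer via the minimum-degree argument, not a consequence of maximality. The additive $-1$ in \textbf{P3} then comes from a separate Euler-formula refinement showing the last step adds at most four vertices, which is the only place where an argument in the spirit of your $H''$-style counting legitimately appears.
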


\begin{proof}
The construction is iterative: at the beginning, we set $\FF_0$ and $U_0$
to be empty sets, and at each step we enlarge $\FF_k$ to $\FF_{k+1}$ by adding
a single face $f$ and $U_k$ to $U_{k+1}$ by adding at most five vertices
incident with $f$. So, $|\FF_k|=k$.
The sets $\FF_k$ and $U_k$ will satisfy the following:
\begin{description}
\item[Q1] the faces of $\FF_k$ are vertex-disjoint, and
\item[Q2] if a face $f'$ shares a vertex with a face $f\in\FF_k$, then $f'$ is incident to a vertex of $U_k$.
\end{description}

Suppose that $\FF_k$ and $U_k$ have already been constructed.
If every face of $R$ is incident to a vertex of $U_k$, set $\FF=\FF_k$ and $U=U_k$ (we verify
at the end of the proof the properties P1, P2 and P3).
So, we assume that there is a face of $R$ incident to no vertex of $U_k$.
To make our presentation clearer, let us call faces of $R$ incident
to vertices of $U_k$ {\em blocked}; the remaining faces of $R$ are
called {\em free}.

We now construct an auxiliary plane graph $H$ in the following way.
The vertices of $H$ are free faces. For every vertex $u$ of $G$ incident
to at least three free faces, say $f_1,\ldots,f_d$ (the faces are listed
in the cyclic order around $u$), add edges between $f_i$ and $f_{i+1}$,
$i=1,\ldots,d$ (indices modulo $d$). We call these edges {\em $u$-edges}.
If two free faces are incident but they do not share a vertex contained
in at least three free faces, we pick an arbitrary vertex $u$ they share and
add an edge to $H$ between the two faces and we call this edge a {\em $u$-edge}.
The construction of $H$ implies that $H$ is a planar graph since there is an embedding of $H$
naturally inherited from $G$; so we consider $H$ as a plane graph.
Since $G$ is $3$-connected, any two faces of $G$ share at most two vertices and
therefore there are at most two parallel edges between any two vertices of $H$.
Moreover, there are two edges between two vertices of $H$ (if and) only if
the associated faces of $G$ share two vertices and
both these vertices are contained in at least three free faces.

Let $H'$ be the plane graph obtained from $H$ by removing from every pair of parallel edges
one of the edges. Since $H'$ is plane, it contains a vertex of degree at most five.
Let $f$ be the face of $R$ associated with a vertex of minimum degree in $H'$. Let $u_1,\ldots,u_\ell$
be the vertices of $G$ such that $f$ is incident with a $u_i$-edge for $i=1,\ldots,\ell$.
Let $d$ be the degree of $f$ in $H$, $d'$ its degree in $H'$, and
$d_3$ the number of indices $i$ such that $u_i$ is shared by at least three free faces of $G$.
Since the number of pairs of parallel edges incident with $f$ is at most $d_3$,
it holds that $d-d'\le d_3$. On the other hand, it holds that $\ell=d-d_3$ ($f$ is incident
with two $u_i$-edges if and only if $u_i$ is shared by three free faces).
So, we obtain that $\ell=d-d_3\le d'\le 5$.

Now set $F_{k+1}=F_k\cup\{f\}$ and $U_{k+1}=U_k\cup\{u_1,\ldots,u_\ell\}$.
We verify that $F_{k+1}$ and $U_{k+1}$ satisfy Q1 and Q2.
The faces of $F_{k+1}$ are disjoint: if $f$ shared a vertex with a face of $F_k$,
it would be incident with a vertex of $U_k$ by Q2 and $f$ would be blocked.
To verify Q2, consider a face $f'$ that shares a vertex with a face of $F_{k+1}$.
If $f'$ is blocked,
it is incident to a vertex of $U_k$. If $f'$ is free (which includes the case $f'=f$),
it must be incident to at least one of the vertices $u_1,\ldots,u_\ell$.

To finish the proof, it remains to argue that the resulting sets $\FF$ and $U$
satisfy the properties P1, P2 and P3. The property P1 is satisfied
since we have stopped when all faces of $R$ are blocked. The property P2
follows from the fact that at each step we added to $U$ at most five vertices,
all of them incident with a face added to $\FF$ at that step.
The construction implies that $|U|\le 5|\FF|$. To verify P3, which asserts that
$|U|\le 5|\FF|-1$, we show that at most four vertices are added to $U$ in the last step.

Consider the graphs $H$ and $H'$ from the last step of the construction;
let $f$ be the face of $R$ added to $\FF$ at this step. If the degree $d'$ of $f$ in $H'$
is at most four, at most $d'\le 4$ vertices are added to $U$. So, we can assume that $d'=5$.
Consequently, the minimum degree of $H'$ is five by the choice of $f$.
Next, let $w_1,\ldots,w_5$ be the neighbors
of $f$ in $H'$ in a cyclic order around $f$ and $f_i$, $i=1,\ldots,5$,
the face of $H'$ containing the vertex associated with $f$, the vertex $w_i$ and the vertex $w_{i+1}$ (indices
modulo 5). Note that some of the faces $f_1,\ldots,f_5$ can coincide.

Let $u_1,\ldots,u_5$ be the vertices added to $U$ at the last step. Since the construction of $\FF$ and $U$
terminates after this step, every free face $f'$ contains one one of the vertices $u_1,\ldots,u_5$.
Hence, the vertex of $H'$ associated with $f'$ is incident with (at least) one of the faces $f_1,\ldots,f_5$ in $H'$.
We now derive a new plane graph $H''$ from $H'$. Insert in each face $f_i$ a new vertex $w'_i$, $i=1,\ldots,5$,
join $w'_i$ to the vertex associated with $f$, the vertex $w_i$ and the vertex $w_{i+1}$ (again, indices
modulo 5). In addition, join each vertex of $H'$ different from the vertex associated with $f$ and
the vertices $w_1,\ldots,w_5$
to one of the vertices $w'_1,\ldots,w'_5$ in such a way that the resulting graph is plane.

We now count the degrees of vertices of $H''$. Let $n$ be the number of vertices of $H''$.
Each vertex $v$ of $H'$ associated with a free face different from $f$ has degree at least six in $H''$
since the minimum degree of $H'$ is five and
each such vertex $v$ is joined to at least one of the vertices $w'_1,\ldots,w'_5$,
The degree of the vertex associated with $f$ is $10$ and the degrees of the vertices
$w'_1,\ldots,w'_5$ are at least three. So, the sum of the degrees of vertices of $H''$ is at least $6(n-6)+10+5\cdot 3=6n-11$.
However, the sum of the degrees cannot exceed $6n-12$ since $H''$ is plane. This contradicts our assumption that
$d'=5$ in the last step of the construction.
\end{proof}

We are now ready to prove the main lemma of this section.

\begin{lemma}
\label{lm-cloud}
Let $G$ be a plane $3$-connected signed graph and $R$ a cloud of $G$.
There exists a set $W$ of vertices of $G$ such that the subgraph of $\VFodd(G)$
induced by $R\cup W$ is connected and $|W|\le 6\nu(R)-2$,
where $\nu(R)$ is the maximum number of vertex-disjoint faces of $R$.
In particular, every face of $R$ is incident with a vertex of $W$.
\end{lemma}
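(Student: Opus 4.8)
The plan is to build $W$ by starting from the sets $\FF$ and $U$ furnished by Lemma~\ref{lm-cloud-aux} and then paying at most one extra vertex for each connection that is still missing. Write $k=|\FF|$. Since the faces of $\FF$ are vertex-disjoint, $k\le\nu(R)$, and property~P3 gives $|U|\le 5k-1$. As $6\nu(R)-2\ge 6k-2$, this leaves a slack of exactly $k-1$ vertices beyond $U$, and the whole argument is arranged to fit inside this slack.

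First I would examine the induced subgraph $\VFodd(G)[R\cup U]$. By~P1 every face of $R$ is adjacent to some vertex of $U$, so no face is isolated, and by~P2 every vertex of $U$ is adjacent to a face of $\FF$, so no vertex of $U$ is isolated either. Hence each connected component of $\VFodd(G)[R\cup U]$ contains a vertex of $U$ and therefore, again by~P2, a face of $\FF$. Distinct components contain disjoint subsets of $\FF$ and each contains at least one face of $\FF$, so the number $c$ of components satisfies $c\le k$.

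Next I would reconnect these components using vertices outside $U$. Let $V'$ be the set of vertices of $G$ incident with a face of $R$; because $R$ is a cloud, the component of $\VFodd(G)$ containing $R$ is exactly $R\cup V'$, so $\VFodd(G)[R\cup V']$ is connected. I would then set up an auxiliary hypergraph $\mathcal H$ whose nodes are the $c$ components of $\VFodd(G)[R\cup U]$ and which has, for each vertex $v\in V'\setminus U$, one hyperedge consisting of those components that contain a face incident with $v$. Tracing any face-vertex path in $\VFodd(G)[R\cup V']$ and observing that a shared vertex lying in $U$ keeps the two incident faces in a common component, whereas a shared vertex outside $U$ is a hyperedge of $\mathcal H$ linking their components, one sees that the connectedness of $\VFodd(G)[R\cup V']$ forces $\mathcal H$ to be connected.

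Finally, a connected hypergraph on $c$ nodes has a connected spanning sub-hypergraph with at most $c-1$ hyperedges (greedily retain a hyperedge only when it merges two current parts), and I would take the corresponding set $W'\subseteq V'\setminus U$ with $|W'|\le c-1\le k-1$. Adding a vertex $v\in W'$ makes it adjacent to every face incident with it and thus fuses exactly the components forming its hyperedge, so $W:=U\cup W'$ makes $\VFodd(G)[R\cup W]$ connected. Then $|W|\le(5k-1)+(k-1)=6k-2\le 6\nu(R)-2$, and every face of $R$ meets $U\subseteq W$ by~P1, which gives the last assertion. I expect the counting to be the delicate point: it is exactly the bound $c\le k$ coming from~P2 together with the one-vertex-per-merge bound for the hypergraph that lets the two contributions $5k-1$ and $k-1$ combine into the required $6\nu(R)-2$, so any slack lost in either estimate would break the bound.
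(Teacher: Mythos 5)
Your proposal is correct and follows essentially the same route as the paper's proof: start from $\FF$ and $U$ of Lemma~\ref{lm-cloud-aux}, observe via P1 and P2 that every component of the subgraph of $\VFodd(G)$ induced by $R\cup U$ contains a face of $\FF$ (so there are at most $|\FF|$ components), and then spend at most one connector vertex per merge, giving $|U|+|\FF|-1\le 6|\FF|-2\le 6\nu(R)-2$. The paper performs the merging iteratively (using the connectivity of the cloud's component of $\VFodd(G)$ to find, at each step, a vertex shared by faces in different components), whereas your hypergraph-with-spanning-connected-subhypergraph formulation is just a repackaging of the same greedy argument.
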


\begin{proof}
Let $\FF$ and $U$ be the set of vertex-disjoint odd faces and vertices satisfying
properties P1, P2 and P3 from Lemma~\ref{lm-cloud-aux}. Set $W=U$.
Consider the component of
$\VFodd(G)$ corresponding to $R$ and the subgraph $H$ of it induced by $R\cup W$.
By P1, every face of $R$ is incident with at least one vertex of $U=W$ and, by P2,
every vertex of $U=W$ is incident with a face of $\FF$. So, each component of $H$
contains at least one face of $\FF$ and thus the number of components of $H$
is at most $|\FF|$.

As long as $H$ is not connected, proceed as follows. Choose one of the components.
By the definition of a cloud,
there exists a face $f$ in this component and a face $f'$ that is not contained in the chosen
component such that $f$ and $f'$ share a vertex. Add this vertex to $W$ and reset $H$ to be
the subgraph of $\VFodd(G)$ induced by $R\cup W$. Since in each step, the number of components
decreases by at least one, the final size of $W$ is at most $|U|+|\FF|-1\le 6|\FF|-2$.
Since $\FF$ is a collection of vertex-disjoint odd faces of $R$, $|\FF|\le\nu(R)$ and
the lemma follows.
\end{proof}

\section{Graphs with deadly faces}
\label{sec-tjoin}

To combine the results of this section with Lemma~\ref{lm-cloud}, we need to consider
signed plane graphs with distinguished faces which we refer to as {\em deadly}.
To be able to cope with them, we will first state a lemma
relating $T$-joins and $T$-cuts in bipartite graphs with deadly vertices,
which correspond to deadly faces in our application.

A {\em td-graph} is a plane connected bipartite graph $H$ with parts $A$ and $B$ and
two distinguished subsets $T$ and $D$ of $A$ such that $T$ has even size.
A $T$-cut of $H$ given by a vertex partition $(X,Y)$ is {\em nice}
if the end-vertex of every edge $e$ that is in $X$ is in $A$
the end vertex of $e$ in $Y$ is not adjacent to a vertex of $D$.
Two nice $T$-cuts are {\em vertex-disjoint}
if there is no vertex of $B$ incident with an edge of each of them.
If two nice $T$-cuts are vertex-disjoint, they must also be edge-disjoint.

\begin{lemma}
\label{lm-deadly}
Let $H$ be a td-graph with parts $A$ and $B$ and non-empty sets $T$ and $D$
with $T\subseteq D$.
If $|T|$ is even and $t$ is the minimum size of a $T$-join of $H$, then $H$ contains
at least $t/2-2|D|+2$ vertex-disjoint nice $T$-cuts.
\end{lemma}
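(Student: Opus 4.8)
The plan is to build the nice $T$-cuts out of a moat solution of the linear program (Y). First I would invoke Proposition~\ref{prop-moats} to fix an optimum moat solution $y$ of (Y) for the pair $(H,T)$; since $H$ is bipartite, the value of $y$ equals the minimum $T$-join size $t$ (Proposition~\ref{prop-dual}), so the associated collection $\bigcup_{s\in T}\CC_s\cup\bigcup_{S\in\OO(T)}\CC_S$ consists of exactly $t$ pairwise edge-disjoint $T$-cuts. The whole argument then amounts to three things: (i) orienting each of these cuts so that it satisfies the first defining condition of niceness; (ii) discarding the few cuts that fail the second, $D$-avoiding, condition; and (iii) thinning the resulting edge-disjoint family into a vertex-disjoint one. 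Because the family is only edge-disjoint whereas the conclusion demands vertex-disjoint cuts, step (iii) is where the factor $1/2$ in the bound is born, and step (ii) is where the dependence on $|D|$ enters.

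For step (i) I would use bipartiteness: along the distance level sets around a terminal $s\in T$, consecutive levels alternate between $A$ and $B$, so every cut of $\CC_s$ has all its edges running between one fixed level lying in $A$ and the next level lying in $B$; orienting the cut so that its $A$-side is $X$ gives the first niceness condition for free. The same should hold, via a short parity computation on the closeness function defining $\CC_S$, for the cuts coming from the odd sets $S\in\OO(T)$, and checking that these moat cuts are equally ``clean'' is the first point that needs care. For step (iii) the crucial structural claim I would establish is that each vertex $b\in B$ is incident with the cut edges of at most two cuts of the collection, and that, by laminarity of $y$, any two cuts sharing such a $b$ are consecutive in the nesting order; consequently the conflict graph on the cuts (adjacency meaning a shared $B$-vertex) is a forest, hence bipartite, and its larger colour class is a vertex-disjoint subfamily containing at least half of the cuts, giving roughly $t/2$.

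It remains to fold in the set $D$, and this is where I expect the real difficulty to lie. Here I would run an induction on $|D|$, carrying along an optimum pair $(J,y)$ of a $T$-join and a solution of (Y) and re-certifying optimality after each reduction through the value-matching criterion recorded after Proposition~\ref{prop-dual}. In the inductive step I would pick a deadly vertex and argue that, since its $B$-neighbours sit on only the innermost one or two cuts of the moats enclosing it, removing it from $D$ while correspondingly shrinking $H$ and updating $(J,y)$ lowers $|D|$ by one and renders at most two of the already chosen vertex-disjoint cuts non-nice; the arithmetic $t/2-2|D|+2$ then closes by induction from the base case $D=T$ with $|T|=2$, where the cuts are simply the concentric $T$-cuts crossing a shortest $t_1$--$t_2$ path. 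The main obstacle is exactly this final bookkeeping: proving rigorously that one deadly vertex can spoil only a bounded number of the selected cuts, and pinning that number down precisely enough to yield the constants $2|D|$ and $+2$, all while keeping $(J,y)$ optimum after the graph is modified. Verifying the cleanliness of the $\OO(T)$-moats in step (i) is a secondary and more routine hurdle.
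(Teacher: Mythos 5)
Your plan founders on the two load-bearing claims in steps (ii) and (iii), and the underlying decomposition---first extract roughly $t/2$ vertex-disjoint cuts from the moat family, then subtract $2|D|$ for cuts spoiled by deadly vertices---cannot be repaired, because the two effects are not separable. Concretely, take $H=K_{1,m}$ with centre $b\in B$ and leaves $t_1,\ldots,t_m\in A$, $m$ even, and $T=\{t_1,\ldots,t_m\}$. The unique minimum $T$-join consists of all $m$ edges, so $t=m$, and the moat solution has $y_{t_i}=1$ for all $i$ and $y_S=0$ for all $S\in\OO(T)$; its cut collection is the $m$ singleton cuts around the $t_i$. Every one of these cuts uses an edge incident with $b$, so a single $B$-vertex can lie on arbitrarily many cuts of the moat collection (not at most two), the conflict graph is $K_m$ rather than a forest, and the largest vertex-disjoint subfamily has size $1$, far below $t/2=m/2$. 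Your laminarity claim fails in the same example: the sets $\{t_i\}$ are pairwise disjoint, hence laminar with no nesting, yet all $m$ cuts meet at $b$. The spoilage bound in step (ii) fails too: since $b$ is adjacent to every $t_j$, a single deadly vertex $t_j\in D$ makes \emph{all} $m$ cuts non-nice through the shared $B$-neighbour $b$, so the number of cuts one deadly vertex can spoil is unbounded. None of this contradicts the lemma, because $T\subseteq D$ forces $|D|\ge m$ and the claimed bound is nonpositive here---but that is exactly the point: the $-2|D|$ term must absorb crowding among the cuts themselves, not merely failures of the $D$-avoidance condition, so your step (iii) target of $t/2$ vertex-disjoint cuts is unattainable in isolation. (Your proposed base case ``$D=T$ with $|T|=2$'' is also not a legitimate anchor for an induction on $|D|$ alone, since nothing reduces general $|T|$ to $2$.)

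The paper avoids all of this by never fixing one moat family and thinning it. It proves the stronger statement that $H$ contains at least $|J|/2-2|D|+|T\cap D|-|T|+2$ vertex-disjoint nice $T$-cuts, by induction on $|J|$; the extra term $|T\cap D|-|T|$ vanishes under $T\subseteq D$ but is what lets the induction close. The factor $1/2$ is born quite differently from your thinning argument: when some $w\in T$ has $y_w\ge 2$, the radius-two ball around $w$ is contracted, shortening the $T$-join by exactly two edges while contributing at most one new cut (the singleton cut around $w$, and only when the ball misses $D$); minimal odd sets $S$ with $y_S\ge 1$ are handled by identifying $S$, the resulting drop $|T'|=|T|-(|S|-1)$ being paid for by the $-|T|$ term. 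Vertex-disjointness and niceness are maintained by construction, since the recursively obtained cuts live in the contracted graph and hence avoid the contracted region, with the optimal pair $(J,y)$ re-certified after each contraction exactly as you envisaged. Any rescue of your approach would need an induction hypothesis coupling the cut count to $|T|$ in this way---at which point you would essentially be reconstructing the paper's argument rather than giving an alternative one.
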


\begin{proof}
We prove the following {\em claim} by induction on the size of a minimum $T$-join:
if $H$ is a td-graph with parts $A$ and $B$ and sets $T$ and $D$, $|T|$ is even and positive, and
$J$ is a minimum $T$-join of $H$, then $H$ contains at least
$|J|/2-2|D|+|T\cap D|-|T|+2$ vertex-disjoint nice $T$-cuts.
Since the statement of the lemma guarantees $T\subseteq D$,
this is enough to establish the lemma.

For the induction, consider a minimum $T$-join $J$ and the corresponding
moat solution $y$ of (Y).
The existence of $y$ is guaranteed by Proposition~\ref{prop-moats}.
During the induction step, we contract a connected subgraph to a new vertex in such a way
that the new vertex belongs to $T$ if and only if the contracted subgraph contains
an odd number of vertices of $T$. In this way, the $T$-cuts of the new graph correspond
to the $T$-cuts of the original graph.
We distinguish three (mutually excluding) cases:
\begin{itemize}
\item {\bf There exists a vertex $w\in T$ with $y_w\ge 2$.}
      Let $W$ be the set containing all vertices at distance at most two from $w$ (including the vertex $w$ itself).
      Contract the subgraph of $H$ induced by $W$ to a new vertex $w'$. Observe that the new graph $H'$
      is bipartite.
      Vertices of $H'$ different from $w'$ belong to $T'$ and $D'$ depending on their presence in the sets $T$ and $D$ in $H$.
      If $|W\cap T|$ is odd, $w'$ belongs to $T'$. If $(W\setminus\{w\})\cap D$ is non-empty,
      then $w'$ belongs to $D'$.
      So, $H'$ can be viewed as a td-graph with sets $T'$ and $D'$.

      Since $y$ is a moat solution, $J$ contains exactly two edges in the contracted subgraph.
      Let $J'$ be obtained from $J$ by contracting these two edges. It is easy to observe that
      $J'$ is a $T'$-join of the td-graph $H'$. Define $y'$ to be $y$ (with the provision that
      $y'_{S\setminus W\cup\{w'\}}=y_S$ for $W\subseteq S$) and, if $w'\in T'$, $y'_{w'}=y_w-2$.
      Note that if $w'\not\in T'$, then $W$ contains another vertex of $T$ and
      (Y) and the non-negativity of $y$ imply that $y_w\le 2$ which means $y_w=2$.
      So, $J'$ and the newly defined $y'$ are optimal solutions, i.e.,
      $J'$ is a minimum $T'$-join of $H'$.

      If $T'=\emptyset$ (so, we cannot apply induction), then $|J|=2$.
      If $D\not=\emptyset$, then we derive from $|D\cap T|\le|D|$ that
      $$|J|/2-2|D|+|T\cap D|-|T|+2\le |J|/2-1-|D|+|T\cap D|-|T|+2$$
      $$\le |J|/2-1-|T|+2=2-|T|\le 0\;\mbox{.}$$
      So, the claim holds. If $D=\emptyset$, then $|J|/2-2|D|+|T\cap D|-|T|+2=3-|T|\le 1$.
      Choose arbitrarily $w\in T$ and observe that the cut with parts $\{w\}$ and $V(H)\setminus\{w\}$
      is a nice $T$-cut. So, the claim also holds.

      If $T'\not=\emptyset$, we invoke the induction.
      So, $H'$ contains a collection $\CC'$ of $|J'|/2-2|D'|+|T'\cap D'|-|T'|+2$ vertex-disjoint nice $T'$-cuts.
      This collection corresponds to a collection $\CC$ of nice $T$-cuts in $H$ (recall that if any vertex
      of $W\setminus\{w\}$ belongs to $D$, then $w'$ is in $D'$). In case that $W$ contains no vertex of $D$,
      enhance the collection $\CC$ by the (nice) $T$-cut with parts $\{w\}$ and $V(H)\setminus\{w\}$.
      Observe that the cuts of $\CC$ are still vertex-disjoint.

      It remains to argue that $\CC$ contains at least $|J|/2-2|D|+|T\cap D|-|T|+2$ cuts.
      Observe that $|J'|=|J|-2$ and $|D'|\le |D|$.
      We distinguish several cases:
      \begin{itemize}
      \item {\bf The sets $W$ and $D$ are disjoint.}	    
            It holds that $|D'|=|D|$ and $|T'\cap D'|=|T\cap D|$. Using the inequality $|T'|\le |T|$, we obtain that
	    \begin{eqnarray*}
            |\CC|=|\CC'|+1&=&|J'|/2-2|D'|+|T'\cap D'|-|T'|+3\\
            &\ge& |J|/2-2|D|+|T\cap D|-|T|+2\;\mbox{.}
            \end{eqnarray*}
      \item {\bf The only vertex of $W$ in $D$ is $w$.}
            It holds that $|D'|=|D|-1$ and $|T'\cap D'|=|T\cap D|-1$. Since $|T'|\le |T|$, we get that
	    \begin{eqnarray*}
            |\CC|=|\CC'|&=&|J'|/2-2|D'|+|T'\cap D'|-|T'|+2\\
            &\ge &|J|/2-1-2(|D|-1)+(|T\cap D|-1)-|T|+2\\
	    &=&|J|/2-2|D|+|T\cap D|-|T|+2\;\mbox{.}
            \end{eqnarray*}
      \item {\bf $D$ contains a vertex of $W\setminus\{w\}$ and in addition $w'\in T'$ and $w\in D$.}
            Since $W$ contains two vertices of $D$,
            the size of $D'$ is strictly smaller than the size of $D$.
	    Now observe that
            \begin{equation}
	    |T\cap D|-|T'\cap D'|=|(W\setminus\{w\})\cap T\cap D|\le |T|-|T'|\;\mbox{.}\label{eq-1}
	    \end{equation}
            So, we obtain from (\ref{eq-1}) that
	    \begin{eqnarray*}
            |\CC|=|\CC'|&=&|J'|/2-2|D'|+|T'\cap D'|-|T'|+2\\
            &\ge& |J|/2-1-2(|D|-1)+|T\cap D|-|T|+2\\
	    &>&|J|/2-2|D|+|T\cap D|-|T|+2\;\mbox{.}
            \end{eqnarray*}
      \item {\bf $D$ contains a vertex of $W\setminus\{w\}$ and in addition $w'\in T'$ and $w\not\in D$.}
	    Since $w$ is in $T$ but not in $D$ and $w'$ is both in $T'$ and $D'$, we obtain that
            \begin{equation}
	    |T\cap D|-(|T'\cap D'|-1)=|(W\setminus\{w\})\cap T\cap D|\le |T|-|T'|\;\mbox{.}\label{eq-2}
	    \end{equation}
	    The equation (\ref{eq-2}) implies that
	    \begin{eqnarray*}
            |\CC|=|\CC'|&=&|J'|/2-2|D'|+|T'\cap D'|-|T'|+2\\
            &\ge& |J|/2-1-2|D|+|T\cap D|-|T|+3\\
	    &=&|J|/2-2|D|+|T\cap D|-|T|+2\;\mbox{.}
            \end{eqnarray*}
      \item {\bf $D$ contains a vertex of $W\setminus\{w\}$ and $w'\not\in T'$.}
            If $|W\cap D|\ge 2$, then $|D'|\le |D|-1$. Now observe that
	    \begin{equation}
	    |T\cap D|-|T'\cap D'|=|W\cap T\cap D|\le |T|-|T'|\;\mbox{.}\label{eq-3}
	    \end{equation}
	    We combine (\ref{eq-3}) and $|D'|\le |D|-1$ to obtain the desired estimate
	    \begin{eqnarray*}
            |\CC|=|\CC'|&=&|J'|/2-2|D'|+|T'\cap D'|-|T'|+2\\
            &\ge& |J|/2-1-2(|D|-1)+|T\cap D|-|T|+2\\
	    &>&|J|/2-2|D|+|T\cap D|-|T|+2\;\mbox{.}
            \end{eqnarray*}
	    If $|W\cap D|=1$, then $w\not\in D$ by the case assumption. So, we can strengthen (\ref{eq-3}):
	    \begin{equation}
	    |T\cap D|-|T'\cap D'|=|W\cap T\cap D|\le |T|-|T'|-1\;\mbox{.}\label{eq-4}
	    \end{equation}
            We now combine (\ref{eq-4}) and $|D'|\le |D|$ to derive
	    \begin{eqnarray*}
	    |\CC|=|\CC'|&=&|J'|/2-2|D'|+|T'\cap D'|-|T'|+2\\
	    &\ge&|J|/2-1-2|D|+|T\cap D|-|T|+3\\
	    &\ge&|J|/2-2|D|+|T\cap D|-|T|+2\;\mbox{.}
	    \end{eqnarray*}
      \end{itemize}
\item {\bf There exists $S\in\OO(T)$ with $y_S\ge 1$ and $y_w\le 1$ for all $w\in T$.}
      Consider an inclusion-wise minimal subset $S\subseteq T$ with $y_S\ge 1$.
      Since $y$ is a moat solution and $H$ is bipartite,
      it holds that $y_w=1$ for every $w\in S$.
      Let $H'$ be the graph obtained from $H$ by identifying the vertices of $S$ and
      let $w'$ be the new vertex. Define $T'$ to be the set containing $w'$ and all vertices of $H'$ that are in $T$.
      Define $D'$ to be the set containing all vertices of $H'$ in $D$; add $w'$ to $D'$ if $S$ contains at least one vertex from $D$.

      Let $J'$ be the $T'$-join of $H'$ obtained from $J$ by removing the $(|S|-1)/2$ new pairs of parallel edges of $J$
      arising by identification of the vertices of $S$. The size of $J'$ is thus $|J|-|S|+1$.
      Define $y'_{w'}=y_S+1$ and $y'$ equal to $y$ otherwise (setting
      $y'_{S'\setminus S\cup\{w'\}}=y_{S'}$ for $S'\supset S$). So, $J'$ and $y'$ are optimal,
      i.e., $J'$ is a minimum $T'$-join of $H'$.

      By induction, we obtain that $H'$ contains a collection $\CC$ of at least $|J'|/2-2|D'|+|T'\cap D'|-|T'|+2$
      vertex-disjoint nice $T'$-cuts. These cuts also form a collection of vertex-disjoint nice $T$-cuts in $H$.
      
      We claim that $\CC$ consists of at least $|J|/2-2|D|+|T\cap D|-|T|+2$ cuts.
      First observe that $|T\cap D|-|T'\cap D'|=|D|-|D'|$ and $|T'|=|T|-(|S|-1)$.
      Hence, the number of cuts in $\CC$ is at least
      \begin{eqnarray*}
      &&|J'|/2-2|D'|+|T'\cap D'|-|T'|+2\\
      &&\ge|J'|/2-|D|-|D'|+|T'\cap D'|-|T'|+2\\
                                   &&\ge|J|/2-(|S|-1)/2-2|D|+|T\cap D|-|T|+(|S|-1)+2\\
				   &&>|J|/2-2|D|+|T\cap D|-|T|+2\;\mbox{.}
      \end{eqnarray*}				   
\item {\bf It holds that $y_w\le 1$ for every $w\in T$ and $y_S=0$ for every $S\in\OO(T)$.}
      Observe that $|J|=\sum_{w\in T}y_w\le |T|$. On the other hand, $|J|$ must contain at least $|T|$ edges
      since the distance between any two vertices of $T$ is at least two. So, $|J|=|T|$.
      The equality $|J|=|T|$ and the inequality $|T\cap D|\le |D|$ now imply
      that $|J|/2-2|D|+|T\cap D|-|T|+2\le 2-|D|-|T|/2$. So, if $D$ is non-empty,
      the claim holds. If $D$ is empty, then $2-|D|-|T|/2\le 1$.
      Choose now a vertex $w$ in $T$ arbitrarily and consider the $T$-cut with parts $\{w\}$ and $V(H)\setminus\{w\}$.
      Since this is a nice $T$-cut, the claim follows.
\end{itemize}
The proof of the lemma is now finished.
\end{proof}

We continue with the following lemma implicit in~\cite{bib-fiorini07+}. We include the proof
for completeness.

\begin{lemma}
\label{lm-tjoin}
Let $G$ be a signed plane graph and let $T$ be the set of its odd faces.
It holds that $\tau(G)\le t/2$ where $t$ is the minimum size of a $T$-join
in $\VF(G)$.
\end{lemma}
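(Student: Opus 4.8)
The plan is to read a transversal off a minimum $T$-join. First I note that $|T|$ is even: summing over all faces the number of odd edges on the boundary (bridges counted twice) gives twice the number of odd edges, so an even number of faces are odd. Thus a $T$-join of $\VF(G)$ exists, and I may assume $G$ connected so that $\VF(G)$ is connected. Fix a minimum $T$-join $J$, so $|J|=t$, and let $S$ be the set of vertices of $G$ incident in $\VF(G)$ with at least one edge of $J$. I will show that $S$ is a transversal of $G$ and that $|S|\le t/2$; together these give $\tau(G)\le|S|\le t/2$.

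The size estimate is the easy half. Every edge of $\VF(G)$ joins a vertex of $G$ to a face of $G$, so each edge of $J$ is counted exactly once when I sum $J$-degrees over the vertices of $G$: $\sum_{v\in V(G)}\deg_J(v)=t$. A vertex of $G$ does not belong to $T$, hence has even degree in the $T$-join $J$; if it lies in $S$ this degree is positive and therefore at least two. Consequently $t\ge 2|S|$, i.e.\ $|S|\le t/2$.

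For the transversal property I would work in the plane drawing of $\VF(G)$ inherited from $G$, in which each face-vertex sits inside its face and each incidence is a short arc from the vertex into the face; then $J$ is a plane subgraph. I decompose the edge set of $J$ into $|T|/2$ paths, each joining two odd faces (together with possibly some cycles), so that the odd faces are partitioned into pairs, each pair joined by one such path. Let $C$ be any odd cycle of $G$. As a Jordan curve it splits the faces of $G$ into those inside and those outside, and by the standard fact for plane signed graphs the parity of $C$ equals the parity of the number of odd faces inside it; hence an odd number of odd faces lie inside $C$. Since the odd faces are paired by the paths of $J$, at least one pair is separated by $C$. The path joining this pair is an alternating sequence $f_0,v_1,f_1,\dots,f_m$ of faces $f_i$ and vertices $v_i$ of $G$ whose endpoints $f_0,f_m$ lie on opposite sides of $C$; so for some $i$ the faces $f_{i-1}$ and $f_i$ lie on opposite sides while sharing a vertex $v_i$, which is used by $J$ and hence lies in $S$.

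The crux, and the step I expect to cost the most care, is to conclude that this $v_i$ actually lies on $C$. Here planarity is decisive: if $v_i$ were not a vertex of $C$, it would lie strictly inside or strictly outside the Jordan curve $C$, so some neighbourhood of $v_i$ would lie entirely on one side of $C$; but $f_{i-1}$ and $f_i$ are both incident with $v_i$ and hence contain points of that neighbourhood, which would force them onto the same side of $C$, a contradiction. Therefore $C$ meets $S$. As $C$ was an arbitrary odd cycle, $G\setminus S$ has no odd cycle, so $S$ is a transversal of the required size, which completes the argument.
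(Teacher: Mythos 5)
Your proposal is correct and takes essentially the same approach as the paper: the same transversal (the vertices of $G$ touched by a minimum $T$-join), the same bipartite degree count giving $|S|\le t/2$, and the same key parity fact that an odd cycle encloses an odd number of odd faces. Where the paper settles the crossing step in one line --- the vertices of $C$ form a vertex cut of $\VF(G)$ separating an odd number of $T$-vertices, so the $T$-join must use a vertex of $C$ --- you make the same step explicit via a path decomposition of $J$ and a Jordan-curve argument, which is a correct, fleshed-out rendering of the paper's argument rather than a different one.
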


\begin{proof}
Fix a minimum $T$-join in $\VF(G)$ and let $W$ be the set of vertices of $G$ (which are
also vertices of $\VF(G)$) incident with an edge of the $T$-join.
Since no vertex of $W$ is in $T$, every vertex of $W$ is incident with at least
two edges of the $T$-join.
Since $\VF(G)$ is bipartite,
each edge of the $T$-join is incident with at most one vertex of $W$ and
we get that $|W|\le t/2$. We show that every odd cycle of $G$ contains a vertex of $W$.

Let $C$ be an odd cycle of $G$, i.e., the interior of $C$ contains an odd number of odd faces.
Since the vertices of $C$ form a vertex cut in $\VF(G)$ and the number of odd faces inside it
is odd, at least one of the vertices of $C$ in $\VF(G)$ is incident with edges of the $T$-join.
Such a vertex is contained in $W$.
\end{proof}

Lemmas~\ref{lm-deadly} and~\ref{lm-tjoin} yield the following theorem.

\begin{theorem}
\label{thm-deadly}
Let $G$ be a signed plane graph with some faces marked as deadly.
If $G$ contains an odd face and every odd face is deadly, then $\tau(G)\le\nu_{\dead}(G)+2d-2$
where $\nu_{\dead}(G)$ is the maximum number of vertex-disjoint odd cycles that
are vertex-disjoint from deadly faces and $d$ is the number of deadly faces.
\end{theorem}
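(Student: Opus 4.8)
The plan is to apply Lemma~\ref{lm-tjoin} and Lemma~\ref{lm-deadly} to the vertex-face incidence graph $\VF(G)$, viewed as a td-graph. First I would reduce to the case where $G$, and hence $\VF(G)$, is connected, and set $H:=\VF(G)$ with part $A$ the faces of $G$ and part $B$ the vertices of $G$. Let $T$ be the set of odd faces of $G$ and $D$ the set of deadly faces, both regarded as vertices of $A\subseteq V(H)$. The assumption that $G$ has an odd face makes $T$ non-empty, and the assumption that every odd face is deadly gives $T\subseteq D$, so $D$ is non-empty as well. The one genuinely arithmetic point is that $|T|$ is even: summing over all faces the number of odd edges on each boundary (bridges twice) counts every odd edge an even number of times, so the number of faces carrying an odd count, namely the number of odd faces, is even. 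Thus $H$, $T$ and $D$ meet the hypotheses of Lemma~\ref{lm-deadly}.

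Let $t$ be the minimum size of a $T$-join of $H=\VF(G)$. Lemma~\ref{lm-tjoin} yields immediately $\tau(G)\le t/2$. On the other hand, Lemma~\ref{lm-deadly} with $|D|=d$ furnishes a collection of at least $t/2-2d+2$ pairwise vertex-disjoint nice $T$-cuts of $H$. The heart of the proof is to convert these cuts into the same number of pairwise vertex-disjoint odd cycles of $G$, each vertex-disjoint from every deadly face. This gives $\nu_{\dead}(G)\ge t/2-2d+2$, and combining with $\tau(G)\le t/2$ produces $t/2\le\nu_{\dead}(G)+2d-2$, whence $\tau(G)\le\nu_{\dead}(G)+2d-2$, as claimed. (Note the chain works even when $t/2-2d+2\le 0$, since then $\nu_{\dead}(G)\ge 0$ already suffices.)

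For the conversion, consider a nice $T$-cut given by a partition $(X,Y)$ of $V(H)$, and let $F_X$ be the set of faces of $G$ lying in $X$. The first part of the niceness condition states that every cut edge has its face-end in $X$; equivalently, no vertex of $G$ in $X$ is incident to a face in $Y$. Consequently the closed region $\Omega$ formed by the faces in $F_X$ has a boundary made of edges of $G$ whose incident vertices lie in $Y$ and meet the cut. The second part of niceness guarantees that none of these boundary vertices is incident to a deadly face. Because $(X,Y)$ is a $T$-cut, $F_X$ contains an odd number of odd faces, so by the same parity principle used in the proof of Lemma~\ref{lm-tjoin}, relating the parity of a closed boundary to the number of odd faces it encloses, the boundary of $\Omega$ carries an odd number of odd edges and therefore contains an odd cycle $C$. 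Every vertex of $C$ is a boundary vertex of $\Omega$, hence meets the cut and so is vertex-disjoint from all deadly faces. Finally, if two nice $T$-cuts are vertex-disjoint, then no vertex of $G$ meets both, so the two extracted cycles share no vertex; applying this across the whole collection yields $t/2-2d+2$ pairwise vertex-disjoint odd cycles avoiding every deadly face.

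The step I expect to be the main obstacle is precisely this last translation: making rigorous that a nice $T$-cut in the radial graph $\VF(G)$ traces out a boundary in $G$ which is an honest odd cycle rather than merely an odd closed walk, that this cycle keeps its vertices off every deadly face, and that the notion of vertex-disjointness of cuts from Lemma~\ref{lm-deadly} transfers to vertex-disjointness of the cycles. Everything else is a direct assembly of Lemmas~\ref{lm-tjoin} and~\ref{lm-deadly} together with the elementary parity count establishing that $|T|$ is even.
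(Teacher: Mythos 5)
Your proposal is correct and follows essentially the same route as the paper: model $\VF(G)$ as a td-graph with $T$ the odd faces and $D$ the deadly faces, bound $\tau(G)\le t/2$ via Lemma~\ref{lm-tjoin}, extract $t/2-2d+2$ vertex-disjoint nice $T$-cuts via Lemma~\ref{lm-deadly}, and convert each cut into an odd cycle avoiding deadly faces by taking the symmetric difference of the boundaries of the faces on the $X$-side (your ``boundary of $\Omega$'' argument is exactly the paper's, and your handling of the conversion step you flagged as the obstacle is adequate). Your explicit verification that $|T|$ is even is a point the paper leaves implicit, but it does not change the approach.
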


\begin{proof}
Let $H$ be the vertex-face incidence graph $\VF(G)$ of $G$ and
let $A$ be its part corresponding to faces.
Set $D\subseteq A$ to be the vertices of $H$ corresponding to deadly faces and $T\subseteq A$
those corresponding to odd faces.
So, $H$ is a td-graph.

By the assumption of the theorem, $T$ is a subset of $D$.
Let $J$ be a minimum $T$-join of $H$. By Lemma~\ref{lm-deadly},
$H$ contains $|J|/2-2|D|+2=|J|/2-2d+2$ vertex-disjoint nice $T$-cuts. Let $\CC$ be a collection
of such cuts.

Consider a nice $T$-cut given by a vertex partition $(X,Y)$ and let $\FF$ be the set of faces corresponding
to vertices in $X$. Consider the symmetric difference of the boundary cycles of the faces of $\FF$.
Since the considered cut is a $T$-cut, $\FF$ contains an odd number of odd faces.
Consequently, the symmetric difference is formed by a union of cycles such that at least
one of the cycles is odd. So, every cut of $\CC$ gives rise to an odd cycle. Since the cuts of $\CC$
are vertex-disjoint, these odd cycles are vertex-disjoint, and since the cuts are nice,
the cycles do not share a vertex with a deadly face. We conclude that $G$ contains at least
$|J|/2-2d+2$ vertex-disjoint odd cycles disjoint from deadly faces, i.e., $\nu_{\dead}(G)\ge |J|/2-2d+2$.

On the other hand, $\tau(G)\le |J|/2$ by Lemma~\ref{lm-tjoin} which finishes the proof.
\end{proof}

\section{Main result}

We first combine Lemma~\ref{lm-cloud} and Theorem~\ref{thm-deadly} to prove our bound for $3$-connected plane signed graphs.

\begin{theorem}
\label{thm-3conn}
Let $G$ be a $3$-connected plane signed graph. If $G$ contains an odd cycle,
then $\tau(G)\le 6\nu(G)-2$.
\end{theorem}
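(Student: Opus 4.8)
The plan is to decompose the odd faces of $G$ into clouds, apply Lemma~\ref{lm-cloud} to each cloud to build a small set of vertices that makes each cloud connected in $\VFodd(G)$, and then contract each connected piece so that Theorem~\ref{thm-deadly} can be applied to the resulting graph where the surviving odd faces play the role of deadly faces. Let me work through why this combination should yield the constant $6$.

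First I would observe that the odd faces of $G$ partition into clouds $R_1,\ldots,R_m$, the components of $\VFodd(G)$ restricted to odd faces. For each cloud $R_i$, Lemma~\ref{lm-cloud} supplies a vertex set $W_i$ with $|W_i|\le 6\nu(R_i)-2$ such that the subgraph of $\VFodd(G)$ induced by $R_i\cup W_i$ is connected; in particular every odd face in $R_i$ meets $W_i$. The key accounting point is that the faces witnessing $\nu(R_i)$ for distinct clouds are vertex-disjoint (odd faces in different clouds share no vertex, since sharing a vertex would put them in the same component of $\VFodd(G)$), so $\sum_i \nu(R_i)\le\nu(G)$. Summing the bounds from Lemma~\ref{lm-cloud} over all clouds therefore gives a vertex set $W=\bigcup_i W_i$ with $|W|\le 6\nu(G)-2m$.

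Next I would remove the vertices of $W$ by contraction, or rather contract each connected cloud-plus-$W_i$ structure, to form a new plane signed graph $G'$ in which each cloud collapses to a single odd face. After this operation each former cloud corresponds to exactly one odd face in $G'$, so $G'$ has exactly $m$ odd faces, all of which I would mark as deadly; thus $d=m$ in the notation of Theorem~\ref{thm-deadly}. Applying that theorem yields $\tau(G')\le\nu_{\dead}(G')+2m-2$. I would then translate back: every odd cycle of $G$ either meets $W$ or survives into $G'$, so $\tau(G)\le |W|+\tau(G')$; and an odd-cycle packing in $G'$ disjoint from the deadly faces lifts to a packing in $G$, giving $\nu_{\dead}(G')\le\nu(G)$. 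Combining, $\tau(G)\le(6\nu(G)-2m)+(\nu(G)+2m-2)$, which regrettably overshoots, so the honest accounting must be tighter: the transversal built from $W$ should already neutralize the clouds so that the $T$-join argument of Theorem~\ref{thm-deadly} is applied to a graph whose deadly faces are precisely the $m$ contracted clouds, and the $-2m$ saving in $|W|$ must be made to cancel the $+2d=+2m$ term rather than adding to it.

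The main obstacle I expect is exactly this bookkeeping at the interface between the two halves: making the $+2d-2$ penalty in Theorem~\ref{thm-deadly} align with the $-2$-per-cloud savings in Lemma~\ref{lm-cloud} so that the per-cloud constants telescope to a single global $-2$ rather than accumulating linearly in $m$. The clean way to handle this is to apply the two lemmas in a single coordinated contraction: first contract each cloud together with its $W_i$ into one deadly vertex of the $\VF$-incidence graph, so that the $2|D|=2m$ overhead of the $T$-cut argument is paid for by the very vertices $W_i$ that Lemma~\ref{lm-cloud} has already charged, and verify that the nice $T$-cuts produced by Lemma~\ref{lm-deadly} lift to odd cycles of $G$ avoiding all of $W$. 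Making the contractions respect the td-graph structure (bipartiteness, the sets $T\subseteq D$, and the ``nice'' condition on cuts) while keeping the arithmetic at $6\nu(G)-2$ is the delicate step, and I would verify it by tracking the three quantities $|W|$, $\nu_{\dead}$, and $d$ simultaneously through one combined contraction rather than composing the two lemmas as black boxes.
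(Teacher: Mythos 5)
You follow the paper's route almost exactly---clouds, Lemma~\ref{lm-cloud} per cloud, then pass to a graph $G'$ whose faces swallowing the clouds are marked deadly and apply Theorem~\ref{thm-deadly}---but you do not close the argument, and you say so yourself: your accounting yields $\tau(G)\le(6\nu(G)-2m)+(\nu(G)+2m-2)=7\nu(G)-2$. The gap is that you bound the two packings \emph{separately} by $\nu(G)$, once via $\sum_i\nu(R_i)\le\nu(G)$ and once via $\nu_{\dead}(G')\le\nu(G)$. The missing observation, which is the actual content of the paper's proof, is that the two packings are simultaneously realizable and must be kept as explicit collections: Lemma~\ref{lm-cloud} furnishes for each cloud $R_i$ a collection $\CC_i$ of vertex-disjoint odd faces with $|W_i|\le 6|\CC_i|-2$, and Theorem~\ref{thm-deadly} furnishes a collection $\CC_0$ of odd cycles in $G'$ that avoid the deadly faces; since every face of every cloud lies inside a deadly face of $G'$, the cycles of $\CC_0$ are vertex-disjoint from all the faces in $\CC_1\cup\cdots\cup\CC_k$, so $\CC_0\cup\CC_1\cup\cdots\cup\CC_k$ is a single packing and $|\CC_0|+\sum_i|\CC_i|\le\nu(G)$. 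Then
$$\tau(G)\le|W_0|+\sum_{i=1}^k|W_i|\le\bigl(|\CC_0|+2k-2\bigr)+\sum_{i=1}^k\bigl(6|\CC_i|-2\bigr)=|\CC_0|+6\sum_{i=1}^k|\CC_i|-2\le 6\nu(G)-2\;\mbox{,}$$
where the $-2$ per cloud cancels the $+2d\le 2k$ penalty exactly as you anticipated, and the coefficient $1$ on $|\CC_0|$ is harmlessly absorbed into $6$. No ``single coordinated contraction'' is needed; the two results compose as black boxes once the packings are combined instead of bounded separately, so the delicate step you defer never has to be performed.

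Two secondary points. First, the paper \emph{deletes} $W=W_1\cup\cdots\cup W_k$ rather than contracting: your contraction of each ``cloud-plus-$W_i$ structure'' is not a well-defined operation on plane signed graphs (a cloud is a set of faces, not a subgraph) and risks breaking parities, whereas after deletion each cloud's regions lie in one new face of $G'$ precisely because $R_i\cup W_i$ induces a connected subgraph of $\VFodd(G)$. Second, your claim that $G'$ has exactly $m$ odd faces, one per cloud, is false in general: the merged face containing a cloud's regions may be even, and distinct clouds may merge into one face; what holds and suffices is $d\le m$ together with the check that every odd face of $G'$ is deadly (every odd face of $G$ meets $W$, so it does not survive deletion, and an odd face of $G'$ must contain an odd face of $G$ in its region). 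You also need to treat separately the case where $G'$ has no odd face at all---Theorem~\ref{thm-deadly} presupposes one---in which case $W$ is already a transversal and the bound $\tau(G)\le 6\nu(G)-2$ follows directly from the cloud estimates.
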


\begin{proof}
Let $R_1,\ldots,R_k$ be the clouds of $G$. By Lemma~\ref{lm-cloud},
for each $i=1,\ldots,k$, there exists a set $W_i$ of vertices and a collection $\CC_i$
of vertex-disjoint odd faces of $R_i$ such that $|W_i|\le 6|\CC_i|-2$ and
$R_i\cup W_i$ induces a connected subgraph of $\VFodd(G)$.

Let $G'$ be the graph obtained from $G$ by removing the vertices of $W_1\cup\cdots\cup W_k$.
If $G'$ has no odd faces (which implies it has no odd cycles),
then $W_1\cup\cdots\cup W_k$ is a transversal of $G$.
Since $\CC_1\cup\cdots\cup\CC_k$ is a packing of odd cycles in $G$, we get the following:
$$\tau(G)\le\left|\bigcup_{i=1}^k W_i\right|=\sum_{i=1}^k|W_k|$$
$$\le\sum_{i=1}^k (6|\CC_i|-2)\le 6\left|\bigcup_{i=1}^k \CC_i\right|-2\le 6\nu(G)-2\;\mbox{.}$$
So, we can assume that $G'$ has an odd face.

Mark the new faces of $G'$, i.e., those containing a region bounded by an odd face of $G$, as deadly.
Since each $R_i\cup W_i$ induces a connected subgraph of $\VFodd(G)$,
the regions bounded by the faces of the same cloud $R_i$ are now contained
in the same face of $G'$. Hence, the number of deadly faces does not exceed $k$.
We now apply Theorem~\ref{thm-deadly} to $G'$. So, there exists
a set $W_0$ of vertices of $G'$ such that $G'\setminus W_0$ has no odd cycle and
a collection $\CC_0$ of vertex-disjoint odd cycles of $G'$ disjoint from deadly faces such that
$|W_0|\le|\CC_0|+2k-2$. In particular, the union $W_0\cup W_1\cup\cdots\cup W_k$ is a transversal of $G$ and
the union $\CC_0\cup\CC_1\cup\cdots\cup\CC_k$ is a packing of odd cycles.
We now relate the sizes of the two unions:
$$\tau(G)\le\left|\bigcup_{i=0}^k W_i\right|=|W_0|+\sum_{i=1}^k|W_k|\le |\CC_0|+2k-2+\sum_{i=1}^k (6|\CC_i|-2)$$
$$\le\left(\sum_{i=0}^k 6|\CC_i|\right)-2=6\left|\bigcup_{i=0}^k \CC_i\right|-2\le 6\nu(G)-2\;\mbox{.}$$
This completes the proof of the theorem.
\end{proof}

We now prove our main result.

\begin{theorem}
\label{thm-main}
Let $G$ be a plane signed graph. It holds that $\tau(G)\le 6\nu(G)$.
\end{theorem}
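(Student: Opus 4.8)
The plan is to deduce Theorem~\ref{thm-main} from the $3$-connected case already established in Theorem~\ref{thm-3conn} by induction on the number of vertices. It is convenient to prove the slightly stronger statement that $\tau(G)\le 6\nu(G)-2$ whenever $G$ is \emph{connected} and contains an odd cycle: applying this to each component (a component with no odd cycle contributes $0$ to both sides) and summing yields $\tau(G)\le 6\nu(G)$ for arbitrary $G$, since dropping the $-2$ terms only weakens the bound. In this induction the base case is exactly the $3$-connected case: if $G$ is $3$-connected and has an odd cycle, Theorem~\ref{thm-3conn} gives $\tau(G)\le 6\nu(G)-2$. In every remaining case $G$ is connected but not $3$-connected, hence has a separation of order at most two, and the idea is to split along such a separation, invoke the induction hypothesis on the strictly smaller pieces, and recombine. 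The point of carrying the $-2$ through the induction is that each piece then comes with its own surplus of two, giving exactly the slack needed to pay for the constant losses incurred when gluing.

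The order-one (cut-vertex) case I would treat first, as it is the cleaner one. Let $v$ be a cut vertex and let $D_1,\dots,D_r$ be the components of $G\setminus v$. Every odd cycle of $G$ either avoids $v$, in which case it lives inside some $D_i$, or passes through $v$. If no odd cycle passes through $v$, then $\tau(G)=\tau(G\setminus v)$ and $\nu(G)=\nu(G\setminus v)=\sum_i\nu(D_i)$, and applying the induction hypothesis to each $D_i$ containing an odd cycle (at least one does) gives $\tau(G)\le 6\nu(G)-2$ directly. If some odd cycle uses $v$, I would instead delete $v$, so that $\tau(G)\le 1+\tau(G\setminus v)$, and control the defect: each component of $G\setminus v$ with an odd cycle again contributes an extra $-2$, which more than compensates the $+1$ as soon as deleting $v$ drops $\nu$ or as soon as two such components exist. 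The only delicate configuration is when $v$ lies on an odd cycle, deleting $v$ does not lower $\nu$, and exactly one component of $G\setminus v$ carries odd cycles; there one checks that $v$'s odd cycle must live in that same component, whence the other components are bipartite appendages that can be discarded, reducing the instance to a genuinely $2$-connected one.

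The order-two case is where I expect the real difficulty to lie. Here a pair $\{u,v\}$ separates $G$ into $G_1$ and $G_2$ sharing only $u$ and $v$, and an odd cycle crossing the cut decomposes into a $u$--$v$ path in each side whose parities add. The plan is to record, for each side, which parities of $u$--$v$ paths it realizes, and to encode the opposite side in the analysis of $G_i$ by one or two virtual edges between $u$ and $v$: an even virtual edge, an odd one, or both, subdividing to keep the graph simple and respecting the inherited plane embedding so that planarity and $2$-connectivity of the augmented pieces $G_1^{+}$ and $G_2^{+}$ are preserved. Odd cycles crossing the cut then correspond to odd cycles of the augmented pieces that traverse a virtual edge, and odd cycles confined to one side correspond to odd cycles avoiding the virtual edges, so transversals and packings of $G$ can be assembled from those of $G_1^{+}$ and $G_2^{+}$. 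The main obstacle is the subcase in which a side admits $u$--$v$ paths of both parities: that side then already contains its own odd cycle, so it feeds both $\nu$ and $\tau$, and one must count carefully around the shared vertices $u$ and $v$ to avoid double-counting a cycle or a transversal vertex. I expect that, exactly as in the cut-vertex case, the doubled surplus of $-2$ coming from the two pieces is precisely what absorbs this bounded recombination loss and delivers $\tau(G)\le 6\nu(G)-2$, completing the induction and hence the theorem.
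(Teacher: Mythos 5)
Your outer frame---induction on the number of vertices with the strengthened bound $\tau(G)\le 6\nu(G)-2$ for graphs containing an odd cycle, Theorem~\ref{thm-3conn} as the base case, and splitting along a cut of order at most two---is exactly the paper's, and your cut-vertex analysis is sound (the ``delicate configuration'' you isolate does reduce as you claim: a $v$-cycle in a bipartite side would be disjoint from the packing of the odd side and would raise $\nu$, contradicting the case hypothesis). But the order-two case, which you yourself flag as where the real difficulty lies, is left as an expectation, and the mechanism you propose---augmenting \emph{both} sides with virtual $u$--$v$ edges recording the parities the opposite side realizes, possibly both an even and an odd edge---does not close. Two concrete failures. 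Quantitatively: when both sides realize both parities, a transversal $T_i$ of the augmented piece may meet a crossing odd cycle only in virtual (subdivision) vertices, so assembling a transversal of $G$ forces you to add $\{u,v\}$ (cost $+2$, which the two $-2$ surpluses barely cover); but on the packing side, a cycle of $\CC_i$ traversing a virtual edge needs a $u$--$v$ path of prescribed parity in the other side that is disjoint from the cycles of $\CC_{3-i}$, which nothing guarantees, and discarding such cycles (up to one per side) degrades the bound to $\tau(G)\le 6(\nu(G)+2)-2$, which is useless. Structurally: two parallel virtual edges must be subdivided to keep the graph simple, so if $G_2\setminus\{u,v\}$ has only one or two vertices, the augmented $G_1^{+}$ need not have fewer vertices than $G$, and your induction measure breaks.

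The paper escapes precisely by never encoding a ``both parities'' side with virtual edges; the engine is instead a case split on which of $G_1\setminus C$ and $G_2\setminus C$ contains an odd cycle. If both do, it recurses on the two pieces \emph{with $C$ deleted}---no augmentation---and takes $T_1\cup T_2\cup C$ as the transversal (every odd cycle avoids $C$ or meets it) and $\CC_1\cup\CC_2$ as the packing; the $+|C|\le 2$ is paid by the two $-2$ surpluses. If only $G_1\setminus C$ has an odd cycle but $G_2$ still has one through $C$, it recurses on $G_1\setminus C$ alone, takes $T_1\cup C$, and \emph{extends} the packing by an odd cycle of $G_2$, gaining a fresh factor-$6$ credit that more than absorbs the $+2$. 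Only when $G_2$ is entirely bipartite---so that all $u$--$v$ paths in $G_2$ have one common parity---does a virtual edge appear, and then a single edge of that unique parity suffices: the packing reroutes through $G_2$ and the transversal transfers exactly, since an odd cycle of $G\setminus T_1$ crossing the cut must use both vertices of $C$ and reroutes to an odd cycle of $G'_1\setminus T_1$. This case split is the missing idea; without it your sketch cannot be completed along the lines you propose.
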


\begin{proof}
If $G$ has no odd cycles, there is nothing to prove.
So, we assume that $G$ has an odd cycle.
We prove by the induction on the number of vertices of $G$ the following: if $G$ contains an odd cycle,
then $\tau(G)\le 6\nu(G)-2$.
If $G$ is $3$-connected, the estimate follows from Theorem~\ref{thm-3conn}.
If $G$ is not connected, then apply induction to each connected component containing an odd cycle to get the desired inequality.
So, we assume that $G$ is connected but it contains a vertex cut of size one or two.

Let $C$ be a minimum vertex cut of $G$ and
let $G_1$ and $G_2$ be two non-trivial subgraphs of $G$ such that $G_1$ and $G_2$ intersect at $C$ only and
their union is $G$. If $|C|=2$ and the two vertices of $C$ are adjacent, we include the edge between them to both $G_1$ and $G_2$.

If neither $G_1\setminus C$ nor $G_2\setminus C$ contains an odd cycle, then $\tau(G)\le 2$ since $C$ is a transversal.
Consequently, $\tau(G)\le 6\nu(G)-2$ because $\nu(G)\ge 1$.

If both $G_1\setminus C$ and $G_2\setminus C$ have an odd cycle, then apply induction to each of them.
This yields transversals $T_1$ and $T_2$ and packings $\CC_1$ and $\CC_2$ of odd cycles in $G_1\setminus C$ and $G_2\setminus C$, respectively,
such that $|T_i|\le 6|\CC_i|-2$ for $i=1,2$.
Since $T_1\cup T_2\cup C$ is a transversal of $G$ and $\CC_1\cup\CC_2$ is a packing, we get the following:
$$\tau(G)\le |T_1|+|T_2|+|C|\le 6|\CC_1|-2+6|\CC_2|-2+2\le 6|\CC_1\cup\CC_2|-2\le 6\nu(G)-2\;\mbox{.}$$

It remains to consider (by symmetry) the case that $G_1\setminus C$ has an odd cycle and $G_2\setminus C$ has no odd cycle.
If $G_2$ has an odd cycle, apply induction to $G_1\setminus C$ to get a transversal $T_1$ and a packing $\CC_1$ of odd cycles of $G_1\setminus C$ such that $|T_1|\le 6|\CC_1|-2$.
Since $T_1\cup C$ is a transversal of $G$ and the packing $\CC_1$ can be extended by an odd cycle of $G_2$ to a packing of $G$,
we obtain that
$$\tau(G)\le |T_1|+|C|\le 6|\CC_1|-2+2=6(|\CC_1|+1)-6\le 6\nu(G)-6\le 6\nu(G)-2\;\mbox{.}$$
Hence, we assume that $G_2$ has no odd cycle. If $|C|=2$, then all paths between the two vertices of $C$ in $G_2$ have the same parity.
So, if $|C|=2$ and the two vertices of $C$ are not adjacent, let $G'_1$ be the graph obtained from $G_1$ by adding an edge between the two vertices of $C$
with the parity equal to the common parity of the paths between them in $G_2$. Otherwise, let $G'_1$ be $G_1$. By induction, $G'_1$ contains
a transversal $T_1$ and a packing $\CC_1$ of odd cycles such that $|T_1|\le 6|\CC_1|-2$. The packing $\CC_1$ gives rise to a packing of the same
size in $G$ since we can reroute a possible cycle using the added edge through the interior of $G_2$. We now argue that $T_1$ is a transversal of $G$.
Consider an odd cycle of $G\setminus T_1$. This cycle must include a vertex of $G_1\setminus C$ (there are no odd cycles avoiding vertices of $G_1\setminus C$ in $G$) and
a vertex of $G_2\setminus C$ (otherwise, $T_1$ is not a transversal of $G_1\subseteq G'_1$). In particular, $|C|=2$ and the cycle includes both vertices of $C$.
Rerouting the cycle through the edge between the vertices of $C$ yields an odd cycle in $G'_1\setminus T_1$ contrary to the fact that $T_1$ is a transversal of $G'_1$.
We conclude that $T_1$ is a transversal of $G$ and since $G$ has a packing of $|\CC_1|$ odd cycles, the theorem follows.
\end{proof}

A close inspection of the proofs presented in this paper yields that all their steps can be efficiently performed,
i.e., there exists a polynomial-time algorithm that for a given (signed) planar graph $G$ returns a collection $\CC$ of vertex-disjoint odd cycles and
a set of vertices $W$ such that $G\setminus W$ has no odd cycle and $|W|\le 6|\CC|$. So, the next corollary follows.

\begin{corollary}
\label{cor-alg}
There exists a polynomial time algorithm that for a given planar (signed) graph $G$ returns a collection of vertex-disjoint
odd cycles of size at least $\nu(G)/6$.
\end{corollary}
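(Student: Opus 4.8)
The plan is to read the claimed algorithm directly off the constructive proofs of Lemma~\ref{lm-cloud-aux} through Theorem~\ref{thm-main} and to verify that every step runs in polynomial time; the approximation guarantee is then a formal consequence. Concretely, I would produce \emph{simultaneously} a packing $\CC$ of vertex-disjoint odd cycles and a transversal $W$ (so that $G\setminus W$ has no odd cycle) with $|W|\le 6|\CC|$, exactly as asserted in the remark preceding the corollary. Once such a pair is in hand, the bound requested by the corollary is immediate: since $W$ is a transversal and $\CC$ is a packing, $\nu(G)\le\tau(G)\le|W|\le 6|\CC|$, whence $|\CC|\ge\nu(G)/6$. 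Thus I never need to compute $\nu(G)$ itself; the certificate $W$ forces $\CC$ to be large, and the entire content of the proof is the polynomial-time executability of the pipeline together with the fact that it outputs the pair $(\CC,W)$.

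First I would preprocess: make $G$ simple by the subdivision described in Section~2 and compute a planar embedding, both in polynomial time. I would then follow the recursion in the proof of Theorem~\ref{thm-main}, decomposing $G$ along cut vertices and $2$-cuts; the block-cut and $3$-connected-component decomposition is computable in polynomial time, each gluing step of that proof is an explicit set union, and the recursion depth is bounded by the number of vertices, so this layer is polynomial. For each resulting $3$-connected piece I would run the procedure of Theorem~\ref{thm-3conn}: build $\VFodd(G)$ from the embedding and take its connected components to obtain the clouds $R_1,\dots,R_k$ (checking the parity of each face is straightforward), and for each cloud run the greedy constructions of Lemma~\ref{lm-cloud-aux} and Lemma~\ref{lm-cloud}. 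Those constructions are manifestly polynomial: each iteration builds the auxiliary plane graphs $H$ and $H'$, finds a vertex of degree at most five, and adds one face and at most five vertices, and there are at most $|\FF|\le\nu(R)$ iterations; the connecting phase of Lemma~\ref{lm-cloud} adds one vertex per merge. Finally, each odd face selected into a $\CC_i$ contains an odd cycle in its boundary (as noted in Section~2), which is extracted in polynomial time.

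The main obstacle is the $T$-join layer feeding Theorem~\ref{thm-deadly}, namely Lemma~\ref{lm-deadly} and its reliance on moat solutions. Here I would invoke that a minimum $T$-join reduces to minimum-weight perfect matching on the auxiliary complete graph, and that Edmonds' blossom algorithm solves this in polynomial time while simultaneously returning the integral, laminar dual with the moat structure guaranteed abstractly by Proposition~\ref{prop-dual} and Proposition~\ref{prop-moats}; the hypothesis that $\VF(G)$ is bipartite is what makes the shortest-path weights even and the integral moat solution available. The induction of Lemma~\ref{lm-deadly} then becomes a recursion in which each step performs one explicit contraction (a radius-two ball around a vertex $w$ with $y_w\ge 2$, or the identification of a minimal $S\in\OO(T)$ with $y_S\ge1$) together with the corresponding update of the optimal pair $(J,y)$; since every such step strictly decreases $|J|$ by at least two, the recursion terminates after at most $|J|/2$ steps, hence polynomially many. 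The nice $T$-cuts produced in the base cases are pulled back through the contractions, and in Theorem~\ref{thm-deadly} each nice $T$-cut $(X,Y)$ is turned into an odd cycle by taking the symmetric difference of the boundaries of the faces in $X$ and extracting an odd component, again in polynomial time. Assembling these outputs through the cloud- and cut-decompositions as in the proofs of Theorem~\ref{thm-3conn} and Theorem~\ref{thm-main} yields the desired pair $(\CC,W)$ with $|W|\le 6|\CC|$, and the corollary follows from $\nu(G)\le\tau(G)\le|W|\le 6|\CC|$.
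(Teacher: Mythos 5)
Your proposal is correct and follows exactly the route the paper takes: the paper's entire argument for Corollary~\ref{cor-alg} is the remark that a close inspection of the preceding constructive proofs yields a polynomial-time procedure outputting a packing $\CC$ and a transversal $W$ with $|W|\le 6|\CC|$, from which $|\CC|\ge\tau(G)/6\ge\nu(G)/6$ follows just as you argue. You merely carry out the inspection the paper leaves implicit (decomposition recursion, cloud greedy, blossom-based moat duals), and all of those verifications are sound.
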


\end{document}